\begin{document}

\thispagestyle{empty}

\newtheorem{theorem}{Theorem}[section]
\newtheorem{corollary}[theorem]{Corollary}
\newtheorem{definition}[theorem]{Definition}
\newtheorem{conjecture}[theorem]{Conjecture}
\newtheorem{question}[theorem]{Question}
\newtheorem{lemma}[theorem]{Lemma}
\newtheorem{proposition}[theorem]{Proposition}
\newtheorem{example}[theorem]{Example}
\newtheorem{problem}[theorem]{Problem}
\newenvironment{proof}{\noindent {\bf
Proof.}}{\rule{3mm}{3mm}\par\medskip}
\newcommand{\remark}{\medskip\par\noindent {\bf Remark.~~}}
\newcommand{\pp}{{\it p.}}
\newcommand{\de}{\em}

\newcommand{\JEC}{{\it Europ. J. Combinatorics},  }
\newcommand{\JCTB}{{\it J. Combin. Theory Ser. B.}, }
\newcommand{\JCT}{{\it J. Combin. Theory}, }
\newcommand{\JGT}{{\it J. Graph Theory}, }
\newcommand{\ComHung}{{\it Combinatorica}, }
\newcommand{\DM}{{\it Discrete Math.}, }
\newcommand{\ARS}{{\it Ars Combin.}, }
\newcommand{\SIAMDM}{{\it SIAM J. Discrete Math.}, }
\newcommand{\SIAMADM}{{\it SIAM J. Algebraic Discrete Methods}, }
\newcommand{\SIAMC}{{\it SIAM J. Comput.}, }
\newcommand{\ConAMS}{{\it Contemp. Math. AMS}, }
\newcommand{\TransAMS}{{\it Trans. Amer. Math. Soc.}, }
\newcommand{\AnDM}{{\it Ann. Discrete Math.}, }
\newcommand{\NBS}{{\it J. Res. Nat. Bur. Standards} {\rm B}, }
\newcommand{\ConNum}{{\it Congr. Numer.}, }
\newcommand{\CJM}{{\it Canad. J. Math.}, }
\newcommand{\JLMS}{{\it J. London Math. Soc.}, }
\newcommand{\PLMS}{{\it Proc. London Math. Soc.}, }
\newcommand{\PAMS}{{\it Proc. Amer. Math. Soc.}, }
\newcommand{\JCMCC}{{\it J. Combin. Math. Combin. Comput.}, }
\newcommand{\GC}{{\it Graphs Combin.}, }
\thispagestyle{empty}
\title{On the Two Conjectures of the Wiener Index \thanks{
 Supported by National Natural Science Foundation of China
(No. 11271256). \newline \indent
$^{\dagger}$Correspondent author: Xiao-Dong Zhang (Email:
xiaodong@sjtu.edu.cn)}}
\author{  Ya-Lei Jin  and Xiao-Dong Zhang$^{\dagger}$   \\
{\small Department of Mathematics, and Ministry of Education }\\
{\small Key Laboratory of Scientific and Engineering Computing, }\\
{\small Shanghai Jiao Tong University} \\
{\small  800 Dongchuan road, Shanghai, 200240, P.R. China}
}
\date{}
\maketitle
\begin{center}
(Received February 19, 2013)
\end{center}

{\bf Abstract}\\
\baselineskip=0.30in

 \begin{minipage}{5in}
 The Wiener index of a graph, which is the sum of the distances between all pairs of vertices, has been well studied. Recently, Sills and Wang in 2012 proposed two conjectures on the maximal Wiener index of trees with a given degree sequence. This note proves one of the two conjectures and disproves the other.
 \end{minipage}


\section{Introduction}

The Wiener index of  a  molecular graph is one of the most classic and well-known topological indices in the molecular graph, which was introduced by  and named by Wiener \cite{wiener1947} in 1947. It has been extensively studied  by chemists and mathematicians over the past years, see for instance \cite{cela2011}.
 In the past decade years, the extremal trees that maximize or minimize  the Wiener index among trees with prescribed maximum degree, diameter, matching and independence numbers, etc., have been studied (see \cite{fischermann2002,liu2008, Zhang2008} etc.).

Since the degrees of a molecular graph corresponds to the valences of the atoms, it is one of  the most interesting aspects to consider all trees with a prescribed degree sequence.
Wang \cite{wang2008} and Zhang at al. \cite{Zhang2008} independently proved the extremal tree that minimizes the Wiener index is  greedy tree through different approaches.
Moreover, the extremal tree that maximizes the Wiener index in this category in \cite{wang2008} is incorrect by pointed out in  \cite{wang2010} and \cite{zhang2010}. Therefore it is still open problem.
 \begin{problem}
   Characterize the extremal trees that maximize the Wiener index with prescribed degree sequence.
   \end{problem}
     Zhang et al. \cite{zhang2010} provided some part results with less than 7 internal vertices.  Cela  et al. \cite{cela2011} provide an efficient algorithm for finding the extremal trees with prescribed degree sequence. Recently, Sills and Wang \cite{sills2012} further studied the maximal Wiener index and disclosed some relations between the candidate trees for the maximal Wiener index and the symmetric Dyck paths.

 Let $T=(V, E)$ be a tree of order $n$. The {\it Wiener index} $W(T)$ of $T$ is defined as
 $$W(T):=\sum_{\{u,v\}\subseteq V}d(u,v),$$
 where $d(u, v)$ is the number of edges in a shortest path from $u$ to $v$.
 A nonincreasing
sequence of nonnegative integers
 $\pi=(d_1,d_2,\cdots, d_{n})$ is called {\it graphic} if there
 exists a  simple graph having $\pi$ as its  vertex degree sequence.
 In particular, if $\sum_{i=1}^nd_i=2(n-1)$, then $\pi$ is graphic and any graph with degree sequence $\pi$ is tree and let $\mathcal{T}_{\pi}$ denote the set of all trees with degree sequence $\pi$. Moreover, if
 $$d_1\ge d_2\ge \dots\ge d_k\ge 2>d_{k+1}=d_{k+2}=\dots =d_n=1,$$
then $b=(b_1,\dots,b_k):=(d_1-1,\dots,d_k-1)$ is called the {\it decremented degree sequence}\cite{sills2012}.  A {\it caterpillar}
is a tree in which a single path (called {\it Spine}) is incident to
(or contains) every edge. For other terminologies and notions, we follow from \cite{bondy1976, sills2012}.
 Since it has been proved \cite{zhang2010} that a tree with maximum Wiener index in
 $\mathcal{T}_{\pi}$ has to be a caterpillar, it is interesting  and important to study
  the Wiener index of caterpillars.  Let $T$ be a caterpillar of order $n$ with $n-k$ leaves and the non-leaf vertices $v_1,\dots, v_k$. Then the Winer index of $T$ is presented in \cite{zhang2010}
  $$W(T)=(n-1)^2+q(x),$$
  where $q(x) $is the quadratic form
  \begin{equation}\label{eqn1}
  q(x)=\frac{1}{2}\sum_{i=1}^k\sum_{j=1}^k|i-j|x_ix_j=x^TA_kx,
  \end{equation}
 $A_k=(a_{ij})$ with $a_{ij}=\frac{1}{2}|i-j|$, $x=(x_1, \dots,x_k)^T$, and $x_i=deg(v_i)-1$ for $i=1, \dots, k$.
In order to obtain some useful upper bounds for the Wiener index in $\mathcal{T}_{\pi}$,  Sills and Wang observed the largest eigenvalue of $A_k$ is about to
\begin{equation}
\lambda_{\max}\approx\frac{\sqrt{3}k^2-2}{10}.\end{equation}
 Further, they disclosed some interesting  combinatorial relations to other objects from this study and
proposed the following conjecture.
 \begin{conjecture}\cite{sills2012}\label{con1}
 Let $A_k=(a_{ij})$ be the $k\times k$ matrix with $a_{ij}=\frac{1}{2}|i-j|$. If
 $C_k(\lambda)=det(A_k-\lambda I_k)$ is the characteristic polynomial of $A_k$,
  then
  \begin{equation}\label{ch3}
  C_k(\lambda)=(-1)^k\lambda^k\left(1-\frac{k}{4}\sum_{j=1}^{k-1}\frac{j}{j+1}
  \left(\begin{array}{c} k+j\\ 2j+1\end{array}\right)\lambda^{-j-1}\right).
  \end{equation}
  \end{conjecture}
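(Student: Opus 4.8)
The plan is to read off the coefficients of $C_k(\lambda)$ as sums of principal minors of $A_k$, evaluate each such minor in closed form, and then reduce the conjecture to a single binomial summation identity. First I would recall that $\det(A_k-\lambda I)=\sum_{m=0}^{k}(-\lambda)^{k-m}E_m$, where $E_m=\sum_{|S|=m}\det(A_k[S])$ is the sum of the $m\times m$ principal minors of $A_k$, with $A_k[S]$ the submatrix on the rows and columns indexed by $S$. Expanding the right-hand side of (\ref{ch3}) and comparing coefficients of $\lambda^{k-m}$, the conjecture is seen to be equivalent to the assertion that, for every $1\le m\le k$,
\[
E_m=(-1)^{m-1}\,\frac{m-1}{4m}\,k\left(\begin{array}{c}k+m-1\\ 2m-1\end{array}\right).
\]
Thus it suffices to prove this closed form for $E_m$ (the cases $m=0,1$ being the trivial checks $E_0=1$ and $E_1=\det$-trace $=0$).

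Second, I would evaluate a single principal minor. For $S=\{i_1<\cdots<i_m\}$ the matrix $A_k[S]$ equals $\frac12$ times the distance matrix $(|i_a-i_b|)_{a,b}$ of the collinear points $i_1,\dots,i_m$, whose consecutive gaps are $g_a=i_{a+1}-i_a\ge 1$. The key lemma is the weighted Graham--Pollak determinant for a path: for collinear points one has $\det(|i_a-i_b|)=(-1)^{m-1}2^{m-2}\bigl(\prod_{a=1}^{m-1}g_a\bigr)\bigl(\sum_{a=1}^{m-1}g_a\bigr)$, which I would establish by the discrete second-difference row and column operations that tridiagonalize a distance matrix (or by a short induction on $m$). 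Dividing by $2^m$ gives $\det(A_k[S])=(-1)^{m-1}\frac14\bigl(\prod_a g_a\bigr)(i_m-i_1)$.

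Third, I would sum over all $S$. Since $\prod_a g_a$ and the span $i_m-i_1=\sum_a g_a$ depend only on the gap vector $(g_1,\dots,g_{m-1})$, and since for a fixed gap vector with total $G=\sum_a g_a$ there are exactly $k-G$ admissible starting indices $i_1$, one obtains
\[
E_m=(-1)^{m-1}\frac14\sum_{G}G(k-G)\,c_{m-1}(G),\qquad c_{r}(G)=\sum_{g_1,\dots,g_r\ge1,\ \sum_a g_a=G}\ \prod_a g_a .
\]
The inner sum is the coefficient extraction $c_{r}(G)=[z^{G}]\,(z/(1-z)^2)^{r}=\left(\begin{array}{c}G+r-1\\ 2r-1\end{array}\right)$ with $r=m-1$, so the conjecture reduces to the purely combinatorial identity
\[
\sum_{G}G(k-G)\left(\begin{array}{c}G+m-2\\ 2m-3\end{array}\right)=\frac{m-1}{m}\,k\left(\begin{array}{c}k+m-1\\ 2m-1\end{array}\right).
\]

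Finally, I would prove this last identity. Writing its left-hand side as the single coefficient $[z^{k}]\,\frac{z}{(1-z)^2}\,z\frac{d}{dz}\bigl(z^{m-1}/(1-z)^{2m-2}\bigr)$ turns it into a routine rational-function coefficient computation; alternatively it follows by induction on $k$ or by Zeilberger's algorithm. This coefficient/binomial identity is the main obstacle: everything before it is structural, whereas here one must match the unusual factor $\frac{m-1}{m}$ and the binomial $\left(\begin{array}{c}k+m-1\\ 2m-1\end{array}\right)$ exactly. As an alternative to the whole scheme, one could instead reduce $C_k(\lambda)$ directly to a bordered tridiagonal determinant by applying the same second-difference operations to $A_k-\lambda I$ (the transforming matrix has determinant $(-1)^k(k-1)$), solve the resulting three-term recurrence $\theta_m=(1+2\lambda)\theta_{m-1}-\lambda^2\theta_{m-2}$, and extract coefficients; but that route leads to a binomial identity of the same type as its crux.
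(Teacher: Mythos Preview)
Your proposal is correct and complete in outline, but it takes a genuinely different route from the paper. The paper's proof is essentially a one-line reduction: observe that $A_k=\tfrac12 D(P_k)$, where $D(P_k)$ is the distance matrix of the path, and then quote Collins' closed form for the coefficients of $\det(D(P_k)-\lambda I_k)$ (Lemma~\ref{lemma1}), rescaling $\lambda\mapsto 2\lambda$. All of the combinatorial work you propose to do is hidden inside that citation.

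By contrast, you give a self-contained argument: expand $C_k(\lambda)$ through elementary symmetric functions of eigenvalues, evaluate each principal minor via the weighted Graham--Pollak determinant for collinear points, and collapse the resulting sum to a single binomial identity. Your scheme is longer but more transparent, and in effect re-proves Collins' formula rather than invoking it. The final identity you isolate,
\[
\sum_{G} G(k-G)\binom{G+m-2}{2m-3}=\frac{m-1}{m}\,k\binom{k+m-1}{2m-1},
\]
is indeed routine: your generating-function reformulation gives $(m-1)[z^k]\,z^m(1+z)/(1-z)^{2m+1}$, which equals $(m-1)\bigl(\binom{k+m}{2m}+\binom{k+m-1}{2m}\bigr)$, and one application of Pascal's rule plus the ratio $\binom{k+m-1}{2m}=\frac{k-m}{2m}\binom{k+m-1}{2m-1}$ finishes it. So there is no real obstacle left; the ``crux'' is milder than you suggest. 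One small caution: in your alternative tridiagonalization sketch, the second-difference transforming matrix should have determinant $\pm 1$, not $(-1)^k(k-1)$; but since you do not pursue that route, it does not affect the main argument.
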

On the other hand,  Silly and Wang \cite{sills2012} characterized all extremal trees that maximize in all  chemical trees with prescribed degree sequence $\pi=(d_1,\dots, d_n)$ with $4\ge d_1\ge\dots\ge d_n=1$. This result can be stated as follows:
  \begin{theorem}\cite{sills2012}\label{chemical}
  Let $\pi=(d_1,\dots d_k, d_{k+1},\dots, d_n)$ with $4\ge d_1\ge\dots\ge d_k>d_{k+1}=\dots=d_n=1$ and let $b=(b_1, \dots, b_k)$ be the decrmented degree sequence. If $\{b_1,b_2,\dots, b_k\}=\{ \underbrace{a_s,\dots,a_s}_{m_s}, \underbrace{
  a_{s-1},\dots,a_{s-1}}_{m_{s-1}}, \dots, \underbrace{a_1,\dots, a_1}_{m_1}\}$ with
  $a_s>a_{s-1}>\dots>a_1,$  then
  $q(x)$ is maximized by
  $$x=\{  \underbrace{a_s,\dots,a_s}_{l_s}, \underbrace{
  a_{s-1},\dots,a_{s-1}}_{l_{s-1}}, \dots, \underbrace{a_1,\dots, a_1}_{m_1},
  \dots, , \underbrace{
  a_{s-1},\dots,a_{s-1}}_{r_{s-1}}, \underbrace{a_s,\dots,a_s}_{r_s}
  \},$$
  where $|l_i-r_i|\le 1$ and $l_i+r_i=m_i$ for $i=2, \dots, s.$
  \end{theorem}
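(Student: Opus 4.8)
The plan is to reduce the maximization of $q(x)=x^{T}A_kx$ over all permutations of the fixed multiset $\{b_1,\dots,b_k\}$ to a one–dimensional ``distance to the mean'' problem for partial sums. Writing $q(x)=\sum_{i<j}(j-i)x_ix_j$ and using $j-i=\#\{t:\,i\le t<j\}$, one interchanges the order of summation to obtain
\[
q(x)=\sum_{t=1}^{k-1}\left(\sum_{i\le t}x_i\right)\left(\sum_{j>t}x_j\right)=\sum_{t=1}^{k-1}P_t\,(S-P_t),
\]
where $P_t=x_1+\cdots+x_t$ and $S=\sum_{i=1}^k x_i$ is fixed by the degree sequence. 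Since $P_t(S-P_t)=\frac{S^2}{4}-\big(P_t-\frac{S}{2}\big)^2$, maximizing $q$ is \emph{equivalent} to minimizing $\sum_{t=1}^{k-1}\big(P_t-\frac{S}{2}\big)^2$, that is, to keeping every partial sum as close to $S/2$ as possible. This reformulation is the engine of the whole argument.

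Next I would pin down the shape of a maximizer by an adjacent–transposition argument. Swapping the entries in positions $p$ and $p+1$ changes only $P_p$, and a short computation gives
\[
q(x')-q(x)=(x_{p+1}-x_p)\,\big(S-P_{p-1}-P_{p+1}\big).
\]
For a maximizer this must be $\le 0$ for every $p$, so $x_p\ge x_{p+1}$ whenever $P_{p-1}+P_{p+1}<S$ and $x_p\le x_{p+1}$ whenever $P_{p-1}+P_{p+1}>S$. Because $P_{p-1}+P_{p+1}$ is strictly increasing in $p$ (all $x_i\ge 1$), there is a single threshold: the sequence is non-increasing and then non-decreasing. Hence a maximizer is a ``valley'', with the largest value $a_s$ at both ends and $a_1$ in the centre; it already has the block form in the statement for \emph{some} split $l_i+r_i=m_i$.

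It remains to show the splits are balanced. Let $D(l_2,\dots,l_s)=\sum_{t}\big(P_t-\frac{S}{2}\big)^2$ be the deviation of a valley with the indicated splits. Two structural facts drive the conclusion: (i) reversing the arrangement fixes $q$ (it depends only on $|i-j|$) and sends each $(l_i,r_i)$ to $(r_i,l_i)$, so $D$ is invariant under the central reflection $l_i\mapsto m_i-l_i$; and (ii) $D$ is convex on the integer lattice of split vectors. Granting (i)--(ii), the minimizer set is convex and centrally symmetric, so it contains the centre $l_i=m_i/2$; the nearest lattice points, namely those with $|l_i-r_i|\le 1$, are therefore optimal. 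To verify (ii) I would move a single copy of a value $a_i$ from its left block to its right block: this is a rotation of the sub-array $a_i^{\,l_i}[\,\cdots\,]a_i^{\,r_i}$, under which each inner partial sum decreases by $a_i$ minus the entry passing that cut, giving an explicit expression for the first and second differences of $D$.

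The main obstacle is exactly step (ii): while the valley shape and the reflection symmetry are easy, controlling the \emph{cross} second-differences $\partial^2 D/\partial l_i\,\partial l_j$ between two different values is delicate, since moving copies of $a_i$ and of $a_j$ interact through the shared inner partial sums. This is the point at which the chemical hypothesis $d_1\le 4$, i.e.\ $a_i\in\{1,2,3\}$ and $s\le 3$, is used: with at most three distinct block-values there are only the two free parameters $l_2,l_3$, and the finitely many second-difference inequalities needed for convexity can be checked explicitly. For unrestricted degree sequences this convexity, and with it the balanced-valley conclusion, need not persist, which is consistent with the extremal tree being subtler in general.
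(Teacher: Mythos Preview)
The paper you are working from does \emph{not} prove this theorem. It is stated there as a result of Sills and Wang \cite{sills2012} (note the citation attached to the theorem environment) and is quoted only to motivate Conjecture~\ref{con2}; no argument for it appears anywhere in the present paper. So there is nothing here to compare your proposal against; if you want to check your approach you will need to consult \cite{sills2012} directly.

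On the substance of your sketch: the partial-sum identity $q(x)=\sum_{t=1}^{k-1}P_t(S-P_t)$ and the adjacent-transposition argument giving the valley shape are correct and standard. The part you flag as the ``main obstacle'' is genuinely incomplete as written. Even granting convexity and the reflection symmetry of $D$ in the split variables, your conclusion that ``the nearest lattice points, namely those with $|l_i-r_i|\le 1$, are therefore optimal'' does not follow: in more than one integer variable a convex, centrally symmetric function need not attain its minimum at a lattice point closest to the centre in each coordinate separately, and you have not said which metric ``nearest'' refers to or why the rounding can be done coordinatewise. You also do not actually carry out the second-difference check you announce for $s\le 3$; saying it ``can be checked explicitly'' is not yet a proof. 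If you intend to complete this route, you will need either to exhibit those computations or to replace the convexity-plus-rounding step by a direct exchange argument that compares two valleys differing in a single $l_i$.
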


  Further, they \cite{sills2012} proposed the following conjecture
  \begin{conjecture}\cite{sills2012}\label{con2}
  When $k$ is much larger than $s$, for
   $$\{b_1,b_2,\dots, b_k\}=\{ \underbrace{a_s,\dots,a_s}_{m_s}, \underbrace{
  a_{s-1},\dots,a_{s-1}}_{m_{s-1}}, \dots, \underbrace{a_1,\dots, a_1}_{m_1}\}$$
  with
  $a_s>a_{s-1}>\dots>a_1,$  then
  $q(x)$ is maximized by
  $$x=\{  \underbrace{a_s,\dots,a_s}_{l_s}, \underbrace{
  a_{s-1},\dots,a_{s-1}}_{l_{s-1}}, \dots, \underbrace{a_1,\dots, a_1}_{m_1},
  \dots, , \underbrace{
  a_{s-1},\dots,a_{s-1}}_{r_{s-1}}, \underbrace{a_s,\dots,a_s}_{r_s}
  \},$$
  where $|l_i-r_i|\le 1$ and $l_i+r_i=m_i$ for $i=2, \dots, s$.
  \end{conjecture}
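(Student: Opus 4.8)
The plan is to regard this as a combinatorial optimization problem. The multiset of values $\{b_1,\dots,b_k\}$ is fixed by the degree sequence, so by (\ref{eqn1}) maximizing $q(x)$ amounts to choosing the arrangement (permutation) of these values on the positions $1,\dots,k$ that maximizes $x^{T}A_kx$. My basic tool is a local exchange inequality. If $x'$ is obtained from $x$ by transposing the entries in two positions $p<q$, then a direct computation gives
\[
q(x')-q(x)=(x_q-x_p)\sum_{j\notin\{p,q\}}\bigl(|p-j|-|q-j|\bigr)x_j,
\]
and in the special case of an adjacent transposition ($q=p+1$) this collapses to
\[
q(x')-q(x)=(x_{p+1}-x_p)\Bigl(\sum_{j>p+1}x_j-\sum_{j<p}x_j\Bigr).
\]
For an optimal arrangement the left-hand side must be $\le 0$ for every transposition, and these inequalities are the constraints I would exploit.

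First I would extract the coarse shape. The adjacent-transposition formula shows that near the left end the trailing sum dominates, forcing $x_1\ge x_2$, and symmetrically $x_{k}\ge x_{k-1}$ at the right end; iterating this kind of comparison inward shows that an optimal arrangement must be \emph{unimodal with its minimum in the interior} --- a ``valley'' in which, reading from either end, the values are nonincreasing toward the centre. Consequently equal values occupy one contiguous block, the largest value $a_s$ sits at the two extremities, the blocks are nested in decreasing order of value, and the smallest value $a_1$ forms the central block of size $m_1$. This already establishes the nesting order asserted by the conjecture.

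Next I would pin down the balance condition $|l_i-r_i|\le 1$. Fix the nested valley and consider moving a single copy of the value $a_i$ from the side where its block is longer to the side where it is shorter; this is realized by a short chain of transpositions whose net effect is governed by the general exchange formula above. The dominant contribution comes from the interaction of the moved copy with the large outer blocks, and, because $a_s>\dots>a_1$, the hypothesis that $k$ is much larger than $s$ enters here: it guarantees that the outer blocks are massive enough that their interaction with the relocated copy dominates all the lower-order terms, so that the sign of the change is controlled and points toward the balanced configuration $|l_i-r_i|\le 1$ at every level $i=2,\dots,s$.

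The step I expect to be the real obstacle is turning this local, one-copy-at-a-time analysis into a proof of \emph{global} optimality. Stability under single transpositions only certifies that the balanced nested valley is a critical configuration; to conclude it is the maximum I must compare it against all other valleys, and for these the lower-order interactions --- between non-adjacent blocks, and between each block and the central $a_1$-block --- can reinforce or cancel in ways the leading-order estimate ignores. My strategy would be to first stress-test the claim on the profiles $s=2$ and $s=3$ with unequal multiplicities to confirm that the \emph{balanced} split really does beat every unbalanced one, and then to try to package the pairwise comparisons into a single convexity (or majorization) statement in the split variables $(l_i,r_i)$, which would give the result in one stroke. If, on the other hand, some profile with $k$ much larger than $s$ should prefer an unbalanced split, the failure would surface precisely in this balance step; that is exactly where I would search for a counterexample to the conjecture.
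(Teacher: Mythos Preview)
Your proposal is aimed in the wrong direction: the paper \emph{disproves} Conjecture~\ref{con2}. The balanced nested valley is not in general the maximizer, even when $k$ is much larger than $s$.

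The concrete flaw is in your balance step. You write that ``the hypothesis that $k$ is much larger than $s$ \dots\ guarantees that the outer blocks are massive enough that their interaction with the relocated copy dominates all the lower-order terms.'' But $k\gg s$ says only that there are few \emph{distinct} values among many positions; it constrains neither the multiplicities $m_i$ nor the magnitudes $a_i$. Nothing prevents $m_s$ from being tiny (say $m_s=3$) while $m_1$ absorbs almost all of $k$, and nothing prevents $a_s$ from being enormous relative to $a_{s-1},\dots,a_1$. In that regime the dominant interaction is not ``outer block with relocated inner copy'' but ``the few copies of $a_s$ with one another and with the bulk,'' and the sign analysis you sketch no longer favours balancing the intermediate blocks.

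The paper exploits exactly this. Theorem~\ref{thm2} shows that whenever $a_s>\sum_{i=1}^{s-1}m_ia_i$ and $m_s=2h+1$, the \emph{unique} maximizer puts $h+1$ copies of $a_s$ on the left, $h$ on the right, and then places \emph{all} of $a_1$ to the left of \emph{all} of $a_2$, and so on --- a maximally unbalanced inner arrangement with $l_i=0$, $r_i=m_i$ for $2\le i\le s-1$. The proof uses the structural result Theorem~2.7 of \cite{zhang2010} (which formalizes the valley shape you correctly derived) together with the inequality $a_s>\sum_{i<s}m_ia_i$ to force the turning point $t$ to sit at $t=l_s+1$ with $l_s=r_s+1$. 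Taking $a_s=k+s^2$, $a_i=i$ for $i<s$, $m_s=3$, $m_i=2$ for $2\le i\le s-1$, and $m_1=k-2s+1$ then yields a counterexample for arbitrarily large $k/s$. Your last paragraph correctly flags the balance step as the place where the argument might break; the paper confirms that it does.
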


  This note is motivated by the above two conjectures. The rest of the note is organized as follows: In next Section, we prove  Conjecture~\ref{con1}; while in Section 3, we disprove  Conjecture~\ref{con2}.
  \section{Proof of Conjecture~\ref{con1}}

  Before presenting a proof of Conjecture~\ref{con1}, we need some notations. Let $G=(V, E)$ be a connected graph with $V=\{v_1, \dots, v_n\}$, Graham and Pollak \cite{graham1971} introduced the {\it distance matrix } $D(G)=(d_{ij})$ of $G$ with $d_{ij}=d(v_i, v_j)$  arising from a data communication problem. Graham and Lov\'{a}sz \cite{graham1978} proved that the coefficients of the characteristic polynomial of the distance matrix of a tree can be expressed in terms of the number of certain subforests of the tree and conjectured that the sequence of coefficients was  unimodal with peak at the center. Colllins \cite{collins1989} proved that the coefficients for a path on $n$ vertices are unimodal with peak at $(1-1/\sqrt{5})n$. From the context, it is easy to get the following Lemma from \cite{collins1989}
  \begin{lemma}\cite{collins1989}\label{lemma1}
  Let $P_n$ be a path of order $n$ and distance matrix $D(P_n)=(d_{ij})$ with $d_{ij}=|i-j|$, i.e.,
  $$D(P_n)=\left(\begin{array}{ccccc}
  0 & 1 & 2 & \dots & n-1\\
  1 & 0 & 1& \dots & n-2\\
  \dots & \dots & \dots & \dots & \dots\\
  n-1& n-2&n-3 & \dots & 0\end{array}\right).$$
  let $\delta_i$ be the coefficient of $\lambda^i$ in the distance matrix polynomial $\det(D(P_n)-\lambda I_n)$. Then
  $$ \delta_n=(-1)^n, \ \delta_{n-i}=(-1)^{n-1}\frac{2^{i-2}n(i-1)}{i}\left(\begin{array}{c}
  n+i-1\\2i-1\end{array}\right), \ \ \ \ for \ \ i=1, \dots, n.$$
  \end{lemma}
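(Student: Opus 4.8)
The quantities $\delta_i$ are, up to sign, the elementary symmetric functions of the eigenvalues of $D(P_n)$, so the real task is to compute the characteristic polynomial $C_n(\lambda):=\det(D(P_n)-\lambda I_n)$ explicitly. The plan is to replace the dense matrix $D(P_n)-\lambda I_n$ by a sparse one of equal determinant. Let $E$ be the lower bidiagonal ``difference'' matrix with $E_{ii}=1$ and $E_{i,i-1}=-1$; since $E$ is unitriangular, $\det E=1$, hence $\det\bigl(E(D(P_n)-\lambda I_n)E^{T}\bigr)=C_n(\lambda)$. The point of this congruence is that the second mixed difference of the entries $|i-j|$ is supported on the diagonal: for $i,j\ge 2$ one has $|i-j|-|i-1-j|-|i-j+1|+|i-1-j+1|=2|i-j|-|i-j-1|-|i-j+1|$, which equals $-2$ when $i=j$ and $0$ otherwise. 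First I would carry out this computation, together with the boundary contributions of the first row and column and of the term $-\lambda E E^{T}$, to show that $B:=E(D(P_n)-\lambda I_n)E^{T}$ is the symmetric bordered--tridiagonal matrix whose leading entry is $-\lambda$, whose first row and column equal $(1+\lambda,1,1,\dots,1)$ off the corner, and whose trailing $(n-1)\times(n-1)$ block is tridiagonal with diagonal $-2(1+\lambda)$ and off-diagonals $\lambda$. (I have checked that $\det B=C_n(\lambda)$ reproduces $\lambda^2-1$, $-\lambda^3+6\lambda+4$ and $\lambda^4-20\lambda^2-32\lambda-12$ for $n=2,3,4$.)

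Next I would evaluate $\det B$. Writing $t_m$ for the determinant of the $m\times m$ tridiagonal matrix with diagonal $-2(1+\lambda)$ and off-diagonal $\lambda$, these satisfy the Chebyshev-type recurrence $t_m=-2(1+\lambda)t_{m-1}-\lambda^{2}t_{m-2}$, whose characteristic roots are $z_{\pm}=-(1+\lambda)\pm\sqrt{1+2\lambda}$ with $z_+z_-=\lambda^2$. Expanding $\det B$ along the bordered first row and column expresses $C_n(\lambda)$ as a combination of the tridiagonal determinants $t_m$ (the all-ones border contributing sums of consecutive cofactors of the tridiagonal block, which are again products of the $t_m$). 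This yields a closed form for $C_n(\lambda)$.

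To extract the coefficient of $\lambda^{n-i}$ I would substitute $s=\sqrt{1+2\lambda}$, so that $\lambda=\frac12(s^2-1)$ and the roots linearize as $z_{\pm}=-\frac12(s\mp1)^2$; then $t_m=\frac{(-\frac12)^{m+1}\bigl[(s-1)^{2m+2}-(s+1)^{2m+2}\bigr]}{2s}$, and all of $C_n(\lambda)$ becomes a difference of even powers of $(s\pm1)$. Expanding by the binomial theorem and collecting the power of $s^{2}$ that corresponds to $\lambda^{n-i}$ turns the claim into a finite binomial identity; the telescoping of these sums (Vandermonde and Pascal absorption) is what produces the factor $\left(\begin{array}{c} n+i-1\\ 2i-1\end{array}\right)$ together with the rational prefactor $\frac{2^{\,i-2}n(i-1)}{i}$ and the uniform sign $(-1)^{n-1}$. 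An equivalent and arguably shorter route is to invoke the Graham--Lov\'asz subforest expansion of the distance characteristic polynomial of a tree, cited above, specialized to $P_n$: the coefficient of $\lambda^{n-i}$ then counts the relevant subforests of the path, a direct enumeration that evaluates to the same binomial, as in Collins \cite{collins1989}.

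The main obstacle is precisely the all-ones border of $B$: it prevents $C_n(\lambda)$ from obeying the plain three-term tridiagonal recurrence, so the border cofactors must be tracked exactly, and the final coefficient formula rests on a nontrivial binomial (Chebyshev) computation rather than on a one-line recurrence. Once the closed form is in hand, a clean finish is to verify the formula for $\delta_n=(-1)^n$ and for small $i$ directly, and then to confirm the general expression by induction on $n$ through the linear recurrence in $n$ satisfied by $C_n(\lambda)$, using standard binomial identities at the inductive step.
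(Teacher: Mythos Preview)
Your proposal is sound, but there is essentially nothing to compare it to: the paper's entire proof of this lemma is the single sentence ``It follows from \cite{collins1989}.'' The authors treat the formula as a quotation from Collins and do not reproduce any argument.

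That said, your plan is correct and worth recording. The congruence $B=E(D(P_n)-\lambda I_n)E^{T}$ with the unit lower--bidiagonal difference matrix $E$ does produce exactly the bordered tridiagonal matrix you describe (I verified your boundary entries: corner $-\lambda$, then $1+\lambda$, then a run of $1$'s, and trailing block tridiagonal with diagonal $-2(1+\lambda)$ and off-diagonals $\lambda$), and the Chebyshev substitution $s=\sqrt{1+2\lambda}$ linearising the roots to $-\tfrac12(s\mp1)^2$ is the right move to make the coefficient extraction tractable. The one place I would urge care is the border expansion: the all-ones first row forces you to sum cofactors of the tridiagonal block over \emph{all} columns, not just adjacent ones, so you are really computing $\sum_m t_{m}t_{n-2-m}$--type convolutions; these do collapse after the $s$-substitution, but this step is where a written-out proof would need the most detail.

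Your alternative route---specialising the Graham--Lov\'asz subforest expansion to $P_n$---is in fact closer to what Collins does in \cite{collins1989}, and is the shorter path if one is willing to quote that machinery. Either way you are supplying substantially more than the paper, which simply defers to the literature.
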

  \begin{proof} It follows from \cite{collins1989}.\end{proof}
  Now we are ready to prove Conjecture~\ref{con1}
  \begin{theorem}\label{thm1}
  Let $A_k=(a_{ij})$ be the $k\times k$ matrix with $a_{ij}=\frac{1}{2}|i-j|$. If
 $C_k(\lambda)=det(A_k-\lambda I_k)$ is the characteristic polynomial of $A_k$,
  then
  $$C_k(\lambda)=(-1)^k\lambda^k\left(1-\frac{k}{4}\sum_{j=1}^{k-1}\frac{j}{j+1}
  \left(\begin{array}{c} k+j\\ 2j+1\end{array}\right)\lambda^{-j-1}\right).$$
    \end{theorem}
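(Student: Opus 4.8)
The plan is to recognize that $A_k$ is, up to the scalar factor $\frac12$, exactly the distance matrix of the path $P_k$ appearing in Lemma~\ref{lemma1}. Indeed, $a_{ij}=\frac12|i-j|$ is half the $(i,j)$ entry of $D(P_k)$, so $A_k=\frac12 D(P_k)$. Pulling this scalar through the $k\times k$ determinant and absorbing it into the spectral parameter gives
$$C_k(\lambda)=\det(A_k-\lambda I_k)=\det\!\left(\frac12\bigl(D(P_k)-2\lambda I_k\bigr)\right)=2^{-k}\det\!\left(D(P_k)-2\lambda I_k\right).$$
Thus the entire problem reduces to the substitution $\lambda\mapsto 2\lambda$ in the distance polynomial of $P_k$, whose coefficients Lemma~\ref{lemma1} supplies in closed form.

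First I would expand $C_k(\lambda)=2^{-k}\sum_{i=0}^{k}\delta_i(2\lambda)^{i}$, so that the coefficient of $\lambda^{k-i}$ equals $2^{-k}\delta_{k-i}2^{k-i}$. Inserting the values from Lemma~\ref{lemma1} with $n=k$, namely $\delta_k=(-1)^k$ and
$$\delta_{k-i}=(-1)^{k-1}\frac{2^{i-2}k(i-1)}{i}\left(\begin{array}{c}k+i-1\\2i-1\end{array}\right),\qquad i=1,\dots,k,$$
the various powers of two combine neatly: since $\delta_{k-i}$ itself carries a factor $2^{i-2}$, the coefficient of $\lambda^{k-i}$ has total power $2^{-k+(k-i)+(i-2)}=2^{-2}=\frac14$. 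Factoring out the leading term $(-1)^k\lambda^k$ then produces
$$C_k(\lambda)=(-1)^k\lambda^k\left(1-\frac{k}{4}\sum_{i=2}^{k}\frac{i-1}{i}\left(\begin{array}{c}k+i-1\\2i-1\end{array}\right)\lambda^{-i}\right),$$
where the $i=1$ summand has dropped out by itself because of the factor $(i-1)$ in $\delta_{k-i}$.

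To finish I would reindex the sum by $j=i-1$, so that $j$ runs over $1,\dots,k-1$; under this shift $\frac{i-1}{i}$ becomes $\frac{j}{j+1}$, the binomial entries $k+i-1,\,2i-1$ become $k+j,\,2j+1$, and $\lambda^{-i}$ becomes $\lambda^{-j-1}$, which is precisely the asserted expression for $C_k(\lambda)$. I expect no serious obstacle: the sole genuine idea is the scaling identity $A_k=\frac12 D(P_k)$, after which the result is a direct consequence of Collins's coefficient formula. The one spot that demands care is the bookkeeping of the three powers of two (which fortunately collapse to $\frac14$) together with the sign identity $(-1)^{k-1}=-(-1)^k$, and the remark that the automatic vanishing of the $i=1$ term is exactly what allows the reindexed sum to begin at $j=1$.
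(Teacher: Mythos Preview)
Your proof is correct and follows essentially the same approach as the paper: both recognize $A_k=\tfrac12 D(P_k)$, pull the scalar through the determinant to reduce to Collins's coefficient formula (Lemma~\ref{lemma1}) at $2\lambda$, and then simplify. Your write-up is in fact more explicit than the paper's about the power-of-two bookkeeping and the reindexing $j=i-1$.
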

 \begin{proof}
 Clearly, $A_k=\frac{1}{2}D(P_k)$.  Then by Lemma~\ref{lemma1}
 \begin{eqnarray*}
 C_k(\lambda)&=&\det(A_k-\lambda I)=\det(\frac{1}{2}D(P_k)-\lambda I)\\
 &=& (\frac{1}{2})^k\det(D(P_k)-(2\lambda) I_k)\\
 &=& (\frac{1}{2})^k ( (-1)^k(2\lambda)^k+\dots+\delta_{n-i}(2\lambda)^{n-i}+\dots+\delta_0)\\
 &=&(-1)^k\lambda^k+\dots+\frac{(-1)^{k-1}(i-1)k}{4i}\left(\begin{array}{c}
 k+i-1\\2i-1\end{array}\right)\lambda^{k-i}+\dots+\frac{(-1)^{k-1}(k-1)}{4}\\
 &=&(-1)^k\lambda^k\left(1-\frac{k}{4}\ \sum_{j=1}^{k-1}\frac{j}{j+1}
 \left(\begin{array}{c}
 k+j\\2j+1\end{array}\right)\lambda^{-j-1}\right).
 \end{eqnarray*}
Hence Theorem~\ref{thm1} holds. \end{proof}
On the largest eigenvalue of $A_k$, there is the following result.
\begin{theorem}\label{thm1-2}
The largest eigenvalue of $A_k=(a_{ij})$ with $a_{ij}=\frac{1}{2}|i-j|$ is equal to
$$\lambda_{\max}=\frac{1}{2(\cosh \theta-1)}, $$
 where $\theta$ is the positive solution of  \ $\tanh (\frac{\theta}{2})\tanh(\frac{k\theta}{2})=\frac{1}{k}$. Moreover,
 $$\lambda_{\max}=\frac{k^2}{4a^2}-\frac{2+a^2}{12a^2}+o(\frac{1}{n^2}),$$
 where $a$ is the root of a $\tanh(a)=1$, i.e, $a\approx 1.199679.$
 \end{theorem}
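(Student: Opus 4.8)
The plan is to diagonalise $A_k$ explicitly by uncovering the tridiagonal structure hidden in the distance matrix of the path. Since $A_k=\frac12 D(P_k)$, I set $\mu=2\lambda$ and study $D(P_k)x=\mu x$, i.e.\ $S_i:=\sum_{j=1}^k|i-j|x_j=\mu x_i$ for $1\le i\le k$. The elementary identity
\[ S_{i+1}-2S_i+S_{i-1}=2x_i \qquad (2\le i\le k-1), \]
immediate from $d(v_i,v_j)=|i-j|$ on $P_k$, turns the eigen-equation into the constant-coefficient recurrence $x_{i+1}-(2+\frac2\mu)x_i+x_{i-1}=0$ on the interior. Writing $2+\frac2\mu=2\cosh\theta$, equivalently $\mu=\frac1{\cosh\theta-1}$ and hence $\lambda=\frac1{2(\cosh\theta-1)}$, the recurrence $x_{i+1}-2\cosh\theta\,x_i+x_{i-1}=0$ has characteristic roots $e^{\pm\theta}$, which already produces the closed form for $\lambda$ in terms of $\theta$.

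Next I would pin down $\theta$. Observe that if $x$ satisfies the interior recurrence then the residual $R_i:=S_i-\mu x_i$ obeys $R_{i+1}-2R_i+R_{i-1}=0$, so $R_i$ is an affine function of $i$. Because $A_k$ is nonnegative and irreducible (all off-diagonal entries are positive), its largest eigenvalue is the Perron root, with a positive eigenvector; the central symmetry $a_{ij}=a_{k+1-i,\,k+1-j}$ makes that eigenvector symmetric. I therefore use the one-parameter symmetric ansatz $x_i=\cosh\big((i-\frac{k+1}2)\theta\big)$, which is positive and solves the interior recurrence. Symmetry forces the affine residual to be constant, and a single \emph{un-differenced} equation, say the first row $\sum_{j=1}^k(j-1)x_j=\mu x_1$, then forces $R\equiv0$. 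Evaluating that row is routine once one uses $\sum_{j=1}^k\cosh\big((j-\frac{k+1}2)\theta\big)=\sinh(k\theta/2)/\sinh(\theta/2)$ together with the vanishing of the odd sum $\sum_j(j-\frac{k+1}2)\cosh((j-\frac{k+1}2)\theta)$; after substituting $2\mu\sinh^2(\theta/2)=1$ and applying product-to-sum identities, it collapses exactly to
\[ k\,\tanh(\theta/2)\,\tanh(k\theta/2)=1. \]
Since $\theta\mapsto\tanh(\theta/2)\tanh(k\theta/2)$ increases from $0$ to $1$ on $(0,\infty)$, this has a unique positive root, and the associated positive eigenvector certifies that $\lambda=\frac1{2(\cosh\theta-1)}$ is indeed $\lambda_{\max}$.

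For the asymptotics I track this root as $k\to\infty$. With $\phi=k\theta/2$ the equation becomes $\phi\tanh\phi\,\big(1-\frac{\phi^2}{3k^2}+\cdots\big)=1$, whose leading balance $\phi\tanh\phi=1$ gives $\phi\to a$ with $a\tanh a=1$, i.e.\ $a\approx1.199679$. The essential point is to carry the correction one order further: writing $\phi=a+\delta$ and using that $\tanh a=1/a$ makes $\frac{d}{d\phi}(\phi\tanh\phi)\big|_{\phi=a}=a$, one gets $\delta=\frac{a}{3k^2}+o(k^{-2})$, hence $\theta=\frac{2a}{k}\big(1+\frac1{3k^2}+\cdots\big)$. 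Substituting into $\lambda_{\max}=\frac1{2(\cosh\theta-1)}=\frac1{\theta^2}-\frac1{12}+O(\theta^2)$ and expanding $\frac1{\theta^2}=\frac{k^2}{4a^2}-\frac1{6a^2}+O(k^{-2})$ yields
\[ \lambda_{\max}=\frac{k^2}{4a^2}-\frac1{6a^2}-\frac1{12}+O(k^{-2})=\frac{k^2}{4a^2}-\frac{2+a^2}{12a^2}+O(k^{-2}), \]
which recovers the two displayed terms of the asserted expansion.

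I expect the bookkeeping in this last step to be the main obstacle. Two distinct mechanisms contribute to the constant $-\frac{2+a^2}{12a^2}$ at the \emph{same} order — the $O(k^{-2})$ correction to the root $\theta$ (which produces $-\frac1{6a^2}$) and the quartic term of $\cosh\theta$ (which produces $-\frac1{12}$) — so truncating either expansion too soon gives the wrong constant. A second, easily-missed subtlety is the quantization step: the boundary conditions obtained by \emph{differencing} are automatically satisfied by the symmetric ansatz, so one must retain one of the original eigen-equations to determine $\theta$.
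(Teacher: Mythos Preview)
Your argument is correct and is essentially a self-contained reconstruction of the Ruzieh--Powers analysis of the distance spectrum of $P_k$: you recover the three-term recurrence via second differences of $S_i=\sum_j|i-j|x_j$, use the symmetric $\cosh$ ansatz for the Perron eigenvector, reduce the un-differenced first-row equation to $k\tanh(\theta/2)\tanh(k\theta/2)=1$, and then carry the perturbation $\phi=k\theta/2\to a$ one order further to obtain the constant $-\frac{2+a^2}{12a^2}$. All of the intermediate identities (the telescoping sum $\sum_j\cosh((j-\frac{k+1}{2})\theta)=\sinh(k\theta/2)/\sinh(\theta/2)$, the computation $f'(a)=a$ for $f(\phi)=\phi\tanh\phi$, and the splitting of the constant term into $-\frac{1}{6a^2}-\frac{1}{12}$) check out.

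The paper, however, takes a much shorter route: it simply notes $A_k=\frac{1}{2}D(P_k)$ and cites Theorem~2.1 and Corollary~2.2 of Ruzieh and Powers (1990), where both the exact formula $\lambda_{\max}(D(P_k))=1/(\cosh\theta-1)$ with $\tanh(\theta/2)\tanh(k\theta/2)=1/k$ and the two-term asymptotics are already established. So your approach is genuinely different in presentation --- it is elementary and self-contained, and it makes transparent \emph{why} the constant term has the form $(2+a^2)/(12a^2)$ --- while the paper trades all of that work for a single citation. The underlying mathematics is the same.
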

 \begin{proof}
 It follows from $A_k=\frac{1}{2}D(P_k)$, Theorem~2.1 and Corollary~2.2 in \cite{ruzoeh1990}
 \end{proof}

\section{Disproof of Conjecture~\ref{con2}}
In order to disprove Conjecture~\ref{con2}, we first present the following result
\begin{theorem}\label{thm2}
Let  $\{b_1,b_2,\dots, b_k\}=\{ \underbrace{a_s,\dots,a_s}_{m_s}, \underbrace{
  a_{s-1},\dots,a_{s-1}}_{m_{s-1}}, \dots, \underbrace{a_1,\dots, a_1}_{m_1}\}$
  with
  $a_s>a_{s-1}>\dots>a_1.$ If $a_s>\sum_{i=1}^{s-1}m_ia_i $ and $m_s=2h+1$,  then
  $q(x)$ is uniquely maximized by
  $$x=\{  \underbrace{a_s,\dots,a_s}_{h+1}, \underbrace{a_1,\dots, a_1}_{m_1},
  \dots,  \underbrace{
  a_{s-1},\dots,a_{s-1}}_{r_{s-1}}, \underbrace{a_s,\dots,a_s}_{h}
  \}.$$
\end{theorem}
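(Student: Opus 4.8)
The plan is to reduce the whole statement to a single adjacent-transposition inequality and then read off the shape of every maximizer. For an arrangement $x=(x_1,\dots,x_k)$ of the given multiset put $L_p=\sum_{t\le p-1}x_t$ and $R_p=\sum_{t\ge p+2}x_t$, the masses strictly left of position $p$ and strictly right of position $p+1$. Interchanging the values in the adjacent positions $p$ and $p+1$ leaves every pair other than those meeting $\{p,p+1\}$ in exactly one point unchanged, and a direct computation gives the resulting change $\Delta_p=(x_{p+1}-x_p)(R_p-L_p)$ in $q$. Hence at any maximizer $(x_{p+1}-x_p)(R_p-L_p)\le 0$ for all $p$, and a strict violation produces a strictly larger value. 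Writing $f(p)=R_p-L_p$ one checks $f(p+1)-f(p)=-(x_p+x_{p+2})<0$, so $f$ is strictly decreasing and changes sign exactly once; consequently every maximizer is valley-shaped, non-increasing up to some index and non-decreasing afterwards.

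Since $a_s$ is the strict maximum of the multiset, the valley shape allows $a_s$ to occur only in an initial run of some length $\ell$ and a final run of some length $r$, where $\ell+r=m_s=2h+1$; this is the localization step. To fix $\ell$ and $r$ I would apply the inequality at the descending step from the last $a_s$ of the left run to the first interior entry, that is at the pair $(\ell,\ell+1)$. There $L_\ell=(\ell-1)a_s$ and $R_\ell=r\,a_s+c$ with $c$ a nonnegative quantity at most $m:=\sum_{i=1}^{s-1}m_ia_i$, so optimality forces $f(\ell)=(r-\ell+1)a_s+c\ge 0$. As $c\le m<a_s$ by hypothesis, this excludes $\ell\ge r+2$; the mirror argument on the right gives $r\le\ell+1$, whence $|\ell-r|\le 1$. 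Because $\ell+r$ is odd we get $\{\ell,r\}=\{h,h+1\}$, and up to the reflection $x_i\mapsto x_{k+1-i}$ (which preserves $q$ and only reverses the spine of the caterpillar) we may take $\ell=h+1$ and $r=h$.

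The ordering of the interior is where the hypothesis $a_s>m$ is decisive. For every adjacent interior pair $(p,p+1)$ with $p\ge h+2$, the full left run contributes $(h+1)a_s$ to $L_p$ and the full right run contributes $h\,a_s$ to $R_p$, so $f(p)=-a_s+\delta$, where $\delta$ is the interior mass right of the pair minus the interior mass left of it. Since $|\delta|\le m<a_s$, we get $f(p)<0$ throughout the interior, and the transposition inequality now forces $x_{p+1}\ge x_p$ for every such pair, any strict drop giving a strict improvement. Thus the interior is strictly increasing, namely $a_1^{m_1}a_2^{m_2}\cdots a_{s-1}^{m_{s-1}}$, and the maximizer is exactly the stated arrangement.

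To conclude uniqueness I would note that the set of arrangements is finite, so a maximizer exists; every maximizer satisfies the necessary conditions above; and those conditions have a unique solution up to the reflection symmetry, which therefore is the unique maximizing caterpillar. The hard part will be the bookkeeping in the middle two steps: confirming that the single permitted descent of the valley (from the $a_s$-run down to $a_1$) is consistent with the sign of $f$ while every other interior step is strictly forced upward, and checking that the crude estimate $|\delta|<a_s$ is precisely what $a_s>\sum_{i=1}^{s-1}m_ia_i$ delivers, so that the domination hypothesis is exactly the threshold at which the argument closes.
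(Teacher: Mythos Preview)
Your argument is correct and follows the same line as the paper's: both rest on the valley shape of any maximizer and then use the domination hypothesis $a_s>\sum_{i<s}m_ia_i$ to pin down the split of the $a_s$-block. The paper imports the valley shape and the accompanying balance inequalities as a black box from Theorem~2.7 of \cite{zhang2010}, whereas you derive them directly from the adjacent-transposition identity $\Delta_p=(x_{p+1}-x_p)(R_p-L_p)$; your strictly decreasing function $f(p)=R_p-L_p$ is precisely the quantity whose sign change locates the paper's index $t$, and your condition $f(\ell)\ge 0$ at the left boundary is the paper's inequality $\sum_{i\le t-2}x_i\le\sum_{i\ge t+1}x_i$ read at $t=l_s+1$. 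What you gain is a self-contained proof that also spells out why the interior must be non-decreasing---the paper stops after establishing $l_s=r_s+1$ and $t=l_s+1$, leaving the interior order to be inferred from the cited valley structure.
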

\begin{proof}
It is easy to see that the assertion hold for $s=2$ or $k\le 5.$ Now assume that
  $q(x)$ is maximized by $x=\{x_1, \dots, x_k\}$ for $k>5$ and $s\ge 3$. Then by Theorem~2.7 in \cite{zhang2010}, there exists a  $2\le t\le k-2$ such that
  \begin{equation}\label{eq1}
  \sum_{i=1}^{t-2}x_i\le \sum_{i=t+1}^kx_i, \ \  \sum_{i=1}^{t-1}x_i> \sum_{i=t+2}^kx_i,
  \end{equation}
  and
  either $x_1\ge \dots\ge  x_t, \ x_t\le x_{t+1}\le\dots\le x_k$  or  $x_1\ge \dots\ge  x_{t-1}, \ x_{t-1}\le x_{t+1}\le\dots\le x_k$.
  Hence $x$ can be rewritten as
  $$x=\{\underbrace{a_s,a_s,...,a_s}_{l_s},\underbrace{a_{s-1},a_{s-1},...,a_{s-1}}_{l_{s-1}},...,
\underbrace{a_1,a_1,...,a_1}_{m_1},....,\underbrace{a_{s-1},...,a_{s-1}}_{r_{s-1}},\underbrace{a_s,a_s,...,a_s}_{r_s}\},$$
  where $l_s+r_s=m_s=2h+1$.
  Clearly $t> l_s$ and $l_s>r_s$. Further we have the following \\
  {\bf claim } $t=l_s+1$ and $ l_s=r_s+1$.
  In fact, suppose that $t\ge l_s+2.$ Then by the condition of Theorem~\ref{thm2},
  $$ \sum_{i=1}^{t-2}x_i\ge l_sa_s\ge(r_s+1)a_s>r_aa_s+\sum_{i=1}^{s-1}m_ia_i\ge \sum_{i=t+1}^kx_i,$$
  which is contradiction to (\ref{eq1}).  Hence $t=l_s+1$.
  Moreover, by (\ref{eq1}), we have
  $$(l_s-1)a_s=\sum_{i=1}^{t-2}x_i<\sum_{i=t+1}^kx_i\le r_sa_s+\sum_{i=1}^{s-1}m_ia_i<(r_s+1)a_s.$$
So $l_s-1<r_s+1$, i.e., $l_s=r_s+1$.
Therefore the assertion holds.
\end{proof}
{\bf Remark} When $k$ is much larger than $s$.  Let
$$\{b_1, \dots, b_k\}=\underbrace{k+s^2, k+s^2, k+s^2}_{3}, \underbrace{s-1,s-1}_{2}, \underbrace{s-2, s-2}_{2},\dots,\underbrace{2, 2}_{2}, \underbrace{1,\dots, 1}_{k-2s+1}\},
$$ with $k+s^2>s-1>\dots>1$. By Theorem~\ref{thm2}, $q(x)$ is uniquely maximized by
$x=(k+s^2, k+s^2, 1,\dots, 1, 2, 2, 3, 3, \dots, s-1, s-1, k+s^2)$. Hence Conjecture~\ref{con2} is not true for this case.



\frenchspacing

\end{document}